\newtheorem{thm}{Theorem}[section]
\newtheorem{fac}[thm]{Fact}
\numberwithin{equation}{section}
\theoremstyle{definition}
\begin{document}

\title[Proof of a Conjecture on Permutation Polynomials]{Proof of a Conjecture on Permutation Polynomials over Finite Fields}

\author[Xiang-dong Hou]{Xiang-dong Hou*}
\address{Department of Mathematics and Statistics,
University of South Florida, Tampa, FL 33620}
\email{xhou@usf.edu}
\thanks{* Research partially supported by NSA Grant H98230-12-1-0245.}

\keywords{exponential sum, finite field, permutation polynomial}

\subjclass[2000]{11T06, 11T55}

\begin{abstract}
Let $k$ be a positive integer and $S_{2k}={\tt x}+{\tt x}^4+\cdots+{\tt x}^{4^{2k-1}}\in\Bbb F_2[{\tt x}]$. It was recently conjectured that ${\tt x}+S_{2k}^{4^{2k}}+S_{2k}^{4^k+3}$ is a permutation polynomial of $\Bbb F_{4^{3k}}$. In this note, the conjecture is confirmed and a generalization is obtained. 
\end{abstract}

\maketitle

%%%%%%%%%%%%%%%%%%%%%%%%%%%%%%%%%%%%%%%%%%%%%%%%%%%%%%%%%%%%%%%%%%%%%%%%%
%             section 1 
%%%%%%%%%%%%%%%%%%%%%%%%%%%%%%%%%%%%%%%%%%%%%%%%%%%%%%%%%%%%%%%%%%%%%%%%%
\section{Introduction}

Let $\Bbb F_q$ denote the finite fields with $q$ elements. For integer $k\ge 0$, define
\[
S_{k,q}={\tt x}+{\tt x}^q+\cdots+{\tt x}^{q^{k-1}}\in\Bbb F_p[{\tt x}]\qquad (p=\text{char}\,\Bbb F_q).
\]
When $q$ is fixed, we write $S_{k,q}=S_k$. The main purpose of this note is to prove the following theorem.

\begin{thm}\label{T1.1}
Let $q=4$ and $k\ge 1$. Then ${\tt x}+S_{2k}^{q^{2k}}+S_{2k}^{q^k+3}$ is a permutation polynomial (PP) of $\Bbb F_{q^{3k}}$.
\end{thm}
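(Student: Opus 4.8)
The plan is to exploit the additive (linearized) nature of $S_{2k}$ together with the fact that the nonlinear part of $f$ depends on $x$ only through $S_{2k}(x)$. Writing $f(x)=x+\phi(S_{2k}(x))$ with $\phi(u)=u^{q^{2k}}+u^{q^k+3}$, I would first note that $\phi(S_{2k}(x))$ is constant on every fiber of the $\Bbb F_q$-linear map $S_{2k}$, so on each fiber $f$ is a translation and hence injective. Since $S_{2k}(f(x))=S_{2k}(x)+S_{2k}\big(\phi(S_{2k}(x))\big)$, the map $\bar f(u)=u+S_{2k}(\phi(u))$ satisfies the intertwining $S_{2k}\circ f=\bar f\circ S_{2k}$ and carries $V:=S_{2k}(\Bbb F_{q^{3k}})$ into itself. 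By the Akbary--Ghioca--Wang criterion (equivalently, by splitting the character sum $\sum_x\psi(af(x))$ over the fibers of $S_{2k}$), $f$ permutes $\Bbb F_{q^{3k}}$ if and only if $\bar f$ permutes $V$. This reduction I would carry out first, since it replaces a permutation problem on the whole field by one on the proper subspace $V$ and isolates the genuinely nonlinear contribution.

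Next I would bring in the tower $\Bbb F_q\subset\Bbb F_{q^k}\subset\Bbb F_{q^{3k}}$ and the relative Frobenius $\tau\colon z\mapsto z^{q^k}$, which has order $3$ and generates $\mathrm{Gal}(\Bbb F_{q^{3k}}/\Bbb F_{q^k})$. In these terms $S_{2k}=S_k\circ(1+\tau)$, while $\phi(u)=\tau^2(u)+\tau(u)\,u^3$ (using $3=q-1$, so that $u^3=u^{q-1}=u^q/u$), and hence $S_{2k}(\phi(u))=\tau^2(S_{2k}u)+S_{2k}(\tau(u)u^3)$. I would then pin down $V$ explicitly: identifying $\Bbb F_{q^{3k}}$ with $\Bbb F_q[X]/(X^{3k}-1)$ via a normal basis, $S_{2k}$ becomes multiplication by $(X^k-1)(1+X+\cdots+X^{k-1})$, from which $\dim_{\Bbb F_q}V$ and a clean description of $V$ can be read off (for $k=1$ one finds $V=\ker\mathrm{Tr}_{\Bbb F_{q^{3k}}/\Bbb F_{q^k}}$), and finally simplify $\bar f|_V$ using the order-$3$ relations among $u,\tau(u),\tau^2(u)$.

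The heart of the argument, and the step I expect to be the main obstacle, is proving that this simplified $\bar f$ permutes $V$. The difficulty is the cubic term $\tau(u)u^3=u^{q^k+3}$: its binary weight is $3$, so the associated Weil sum $\sum_{u\in V}\psi\big(\ell(u)+a\,\tau(u)u^3\big)$, with $\ell$ an $\Bbb F_2$-linear form, is a genuinely cubic character sum admitting no free Gauss-sum evaluation. I would attack it by leaning on the degree-$3$ structure of $\Bbb F_{q^{3k}}/\Bbb F_{q^k}$: apply $1,\tau,\tau^2$ to an equality $\bar f(u)=\bar f(u')$, combine the three conjugate relations so that the cancellations forced by $\tau^3=1$ eliminate the cubic cross-terms, and thereby reduce the injectivity of $\bar f$ on $V$ to a norm/trace identity over $\Bbb F_{q^k}$ (for instance, controlling $\mathrm{N}_{\Bbb F_{q^{3k}}/\Bbb F_{q^k}}$ of $u-u'$); alternatively one evaluates the cubic sum directly by exhibiting the underlying Artin--Schreier-type curve as rational.

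Finally, I would isolate exactly which numerical features are used — that $3=q-1$ (so the nonlinear exponent is $q^k+q-1$ and $u^3$ is the Frobenius ratio $u^q/u$), that $S_{2k}$ factors as $S_k\circ(1+\tau)$, and that $[\Bbb F_{q^{3k}}:\Bbb F_{q^k}]=3$ — in order to extract the promised generalization, most naturally by allowing an arbitrary prime power $q$ and replacing the exponent $q^k+3$ by $q^k+q-1$. Throughout, the parity of $k$ will require separate bookkeeping, since $X^{3k}-1$ fails to be separable in characteristic $2$ when $k$ is even, which affects both $\dim_{\Bbb F_q}V$ and the precise description of $V$.
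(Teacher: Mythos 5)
Your reduction is sound as far as it goes: writing $f(x)=x+\phi(S_{2k}(x))$ with $\phi(u)=u^{q^{2k}}+u^{q^k+3}$, noting that $f$ is a translation on each fiber of $S_{2k}$, and invoking the Akbary--Ghioca--Wang criterion does correctly reduce the problem to showing that $\bar f(u)=u+S_{2k}(\phi(u))$ permutes $V=S_{2k}(\Bbb F_{q^{3k}})$. (Two small corrections along the way: $V=\ker\mathrm{Tr}_{q^{3k}/q^k}$ for \emph{every} $k$, not just $k=1$, because $\gcd(1+{\tt x}+\cdots+{\tt x}^{2k-1},{\tt x}^{3k}-1)={\tt x}^k-1$ forces $\ker S_{2k}=\Bbb F_{q^k}$ regardless of the parity of $k$; so your worry about separate bookkeeping for even $k$ is unfounded, and $\dim_{\Bbb F_q}V=2k$ always.) But the proof stops exactly where you say the main obstacle lies: you never establish that $\bar f$ permutes $V$. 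What you offer for that step is a list of candidate strategies --- combining the three Galois conjugates of $\bar f(u)=\bar f(u')$, controlling a relative norm of $u-u'$, or showing an Artin--Schreier curve is rational --- none of which is carried out, and none of which obviously closes. Since this is the entire content of the theorem (everything before it is formal), the proposal as written is a plan, not a proof.

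For comparison, the paper's route through the character-sum criterion succeeds because of a specific algebraic collapse that your sketch does not anticipate. For characters $a$ with $\mathrm{Tr}_{q^{3k}/q^k}(a)=0$, writing $a=c+c^{q^k}$ and using $S_{2k}+S_{2k}^{q^k}+S_{2k}^{q^{2k}}\equiv 0$, one finds that $\mathrm{Tr}(ag(x))$ reduces entirely to $\mathrm{Tr}\bigl(cS_{2k}^{1+2q^k+q^{2k}}\bigr)$; the exponent is $(1+q^k)^2$, i.e.\ the cubic term is really the \emph{square of a relative-norm-type product} $x^{1+q^k}$ restricted to $V$. Writing $V=d_1\Bbb F_{q^k}+d_2\Bbb F_{q^k}$, the resulting double sum then factors as a product of two one-dimensional linear character sums, at least one of which vanishes because $d_1^{q^k},d_2^{q^k}$ cannot both be annihilated by $\mathrm{Tr}_{q^{3k}/q^k}(c\,\cdot)$. (The remaining characters, those with $\mathrm{Tr}_{q^{3k}/q^k}(a)\ne 0$, are killed by a translation by $y\in\Bbb F_{q^k}=\ker S_{2k}$.) If you want to complete your version, you would need to find the analogous identity for $\bar f$ on $V$; simply asserting that conjugate relations ``eliminate the cubic cross-terms'' does not do this. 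Your proposed generalization (replacing $q^k+3$ by $q^k+q-1$ for general $q$) is likewise unverified and differs from the one the argument actually supports, which keeps the exponent $q^k+3$ in characteristic $2$ and generalizes the linearized part instead.
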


Theorem~\ref{T1.1} appeared as a conjecture in a recent study of permutation polynomials over finite fields \cite{FHL}. We refer the reader to \cite{FHL} for more background of this conjecture. An adaptation of the proof of Theorem~\ref{T1.1} gives a slight generalization of Theorem~\ref{T1.1}, which is also included in thos note.

%%%%%%%%%%%%%%%%%%%%%%%%%%%%%%%%%%%%%%%%%%%%%%%%%%%%%%%%%%%%%%%%%%%%%%%%%
%             section 2 
%%%%%%%%%%%%%%%%%%%%%%%%%%%%%%%%%%%%%%%%%%%%%%%%%%%%%%%%%%%%%%%%%%%%%%%%%
\section{Proof of Theorem~\ref{T1.1}}

We first recall two facts:

\begin{fac}\cite[Theorem~7.7]{LN}\label{F2.1}
\rm
Let $f\in\Bbb F_q[{\tt x}]$. Then $f$ is a PP of $\Bbb F_q$ if and only if for all $a\in\Bbb F_q^*$, $\sum_{x\in\Bbb F_q}\zeta_p^{\text{\rm Tr}_{q/p}(af(x))}=0$, where $p=\text{\rm char}\,\Bbb F_q$ and $\zeta_p=e^{\frac{2\pi i}p}$. 
\end{fac}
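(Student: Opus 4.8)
The plan is to prove both directions of Fact~\ref{F2.1} through a single application of the orthogonality relations for the additive characters of $\Bbb F_q$. For $a\in\Bbb F_q$ write $\chi_a(y)=\zeta_p^{\text{Tr}_{q/p}(ay)}$; these are the additive characters of $\Bbb F_q$, and the only external ingredient I would need is the basic orthogonality relation, namely that $\sum_{y\in\Bbb F_q}\chi_a(y)$ equals $q$ when $a=0$ and $0$ otherwise. The quantity to control is $N(b)=\#\{x\in\Bbb F_q:f(x)=b\}$ for each $b\in\Bbb F_q$, since $f$ is a PP precisely when $N(b)=1$ for every $b$.

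First I would express the indicator of the event $f(x)=b$ as a character sum: by orthogonality, $\frac1q\sum_{a\in\Bbb F_q}\chi_a(f(x)-b)$ equals $1$ when $f(x)=b$ and $0$ otherwise. Summing over $x$, using that each $\chi_a$ is a homomorphism so $\chi_a(f(x)-b)=\chi_a(f(x))\chi_a(-b)$, and interchanging the two finite sums gives
\[
N(b)=\frac1q\sum_{a\in\Bbb F_q}\chi_a(-b)\sum_{x\in\Bbb F_q}\chi_a(f(x)).
\]
Next I would isolate the contribution of the trivial character $a=0$, whose inner sum is $q$; writing $S_a=\sum_{x\in\Bbb F_q}\zeta_p^{\text{Tr}_{q/p}(af(x))}$ this yields the key identity
\[
N(b)=1+\frac1q\sum_{a\in\Bbb F_q^*}\zeta_p^{-\text{Tr}_{q/p}(ab)}\,S_a .
\]

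With this identity in hand both implications follow. For the forward direction, if $f$ is a PP then $x\mapsto f(x)$ is a bijection of $\Bbb F_q$, so for each $a\neq0$ one has $S_a=\sum_{y\in\Bbb F_q}\chi_a(y)=0$ by orthogonality, which is exactly the stated condition. For the converse, if $S_a=0$ for all $a\in\Bbb F_q^*$, then the displayed identity collapses to $N(b)=1$ for every $b\in\Bbb F_q$, so $f$ is a bijection and hence a PP.

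There is no serious obstacle here: the argument is a standard orthogonality computation, and the only points requiring care are the correct normalization of the indicator character sum and the clean separation of the $a=0$ term, so that the factor $1/q$ cancels and $N(b)$ emerges as the integer $1$. I would state the orthogonality relation explicitly at the outset to keep the interchange of the two finite summations transparent.
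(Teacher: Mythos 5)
Your proof is correct: the indicator identity, the separation of the trivial character, and both directions of the equivalence all check out. The paper itself gives no proof of this fact, citing \cite[Theorem~7.7]{LN} instead, and your argument is essentially the standard orthogonality proof found in that reference, so there is nothing to add.
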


\begin{fac}\cite[Lemma~6.13]{FHL}\label{F2.2}
\rm
Let $p$ be a prime and $f:\Bbb F_p^n\to \Bbb F_p$ a function. If there exists a $y\in\Bbb F_p^n$ such that $f(x+y)-f(x)$ is a nonzero constant for all $x\in\Bbb F_p^n$, then $\sum_{x\in\Bbb F_p^n}\zeta_p^{f(x)}=0$.
\end{fac}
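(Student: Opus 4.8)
The plan is to exploit the invariance of the character sum under translation by $y$. Write $c\in\Bbb F_p^*$ for the common nonzero value of $f(x+y)-f(x)$, so that $f(x+y)=f(x)+c$ in $\Bbb F_p$ for every $x\in\Bbb F_p^n$, and set $S=\sum_{x\in\Bbb F_p^n}\zeta_p^{f(x)}$; the goal is to show $S=0$.

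First I would note that $x\mapsto x+y$ is a bijection of $\Bbb F_p^n$, so reindexing the sum by this substitution leaves its value unchanged:
\[
S=\sum_{x\in\Bbb F_p^n}\zeta_p^{f(x+y)}.
\]
Next I would substitute $f(x+y)=f(x)+c$ and use that $a\mapsto\zeta_p^a$ is a homomorphism from $(\Bbb F_p,+)$ to $\Bbb C^*$, so that $\zeta_p^{f(x)+c}=\zeta_p^c\,\zeta_p^{f(x)}$ with the factor $\zeta_p^c$ independent of $x$:
\[
S=\sum_{x\in\Bbb F_p^n}\zeta_p^{f(x)+c}=\zeta_p^c\sum_{x\in\Bbb F_p^n}\zeta_p^{f(x)}=\zeta_p^c\,S.
\]
This produces the identity $(1-\zeta_p^c)S=0$.

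Finally I would invoke that $c$ is nonzero in $\Bbb F_p$: regarding $c$ as an integer not divisible by $p$, the quantity $\zeta_p^c=e^{2\pi ic/p}$ is a $p$-th root of unity different from $1$, hence $1-\zeta_p^c\neq0$, and the identity above then forces $S=0$.

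The argument is essentially mechanical once the shift is in place; the one step that genuinely uses the hypothesis is the last, where the constant being \emph{nonzero} modulo $p$ is exactly what guarantees $\zeta_p^c\neq1$. Were the difference $f(x+y)-f(x)$ permitted to vanish, the factor $1-\zeta_p^c$ would be $0$ and the conclusion would break down, so I expect no deeper obstacle beyond checking this nonvanishing; in particular the bijectivity of $x\mapsto x+y$ on $\Bbb F_p^n$ holds irrespective of any structure of $f$.
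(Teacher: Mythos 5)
Your proof is correct and complete: the translation reindexing $x\mapsto x+y$ gives $S=\zeta_p^c S$, and since $c\not\equiv 0\pmod p$ the factor $1-\zeta_p^c$ is nonzero, forcing $S=0$. The paper itself states this fact without proof, citing \cite[Lemma~6.13]{FHL}, and your argument is the standard one for this lemma, so there is nothing to compare beyond noting that your reasoning is exactly what the citation supplies.
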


\begin{proof}[Proof of Theorem~\ref{T1.1}] 
Recall that $q=4$ and $k\ge 1$. Write $g={\tt x}+S_{2k}^{q^{2k}}+S_{2k}^{q^k+3}$ and $\text{Tr}=\text{Tr}_{q^{3k}/2}$. By Fact~\ref{F2.1}, it suffices to show that for every $a\in \Bbb F_{q^{3k}}^*$, 
\begin{equation}\label{2.1}
\sum_{x\in\Bbb F_{q^{3k}}}(-1)^{\text{Tr}(ag(x))}=0.
\end{equation}
We will use the relation 
\begin{equation}\label{2.2}
S_{2k}+S_{2k}^{q^k}+S_{2k}^{q^{2k}}\equiv 0\pmod{{\tt x}^{q^{3k}}-{\tt x}}.
\end{equation}

\medskip
{\bf Case 1.}
Assume $\text{Tr}_{q^{3k}/q^k}(a)\ne 0$. Then there exists a $y\in\Bbb F_{q^k}$ such that $\text{Tr}_{q^k/2}\bigl[y\text{Tr}_{q^{3k}/q^k}(a)\bigr]\ne 0$. For all $x\in\Bbb F_{q^{3k}}$ we have $S_{2k}(x+y)=S_{2k}(x)+S_{2k}(y)=S_{2k}(x)$. Hence
\[
\text{Tr}\bigl[ag(x+y)-ag(x)\bigr]=\text{Tr}(ay)=\text{Tr}_{q^k/2}\bigl[y\text{Tr}_{q^{3k}/q^k}(a)\bigr],
\]
which is a nonzero constant. By Fact~\ref{F2.2}, \eqref{2.1} holds.

\medskip
{\bf Case 2.}
Assume $\text{Tr}_{q^{3k}/q^k}(a)= 0$. The $a=c+c^{q^k}$ for some $c\in\Bbb F_{q^{3k}}\setminus\Bbb F_{q^k}$. For $x\in \Bbb F_{q^{3k}}$, we write $S_{2k}(x)=S_{2k}$ (a slight abuse of notation). We have 
\begin{equation}\label{2.3}
\begin{split}
&\text{Tr}\bigl(ag(x)\bigr)\cr
=\,&\text{Tr}\bigl[(c+c^{q^k})g(x)\bigr]\cr
=\,&\text{Tr}\bigl[c\bigl(g(x)+g(x)^{q^{2k}}\bigr)\bigr]\cr
=\,&\text{Tr}\bigl[c(x+S_{2k}^{q^{2k}}+S_{2k}^{q^k+3}+x^{q^{2k}}+S_{2k}^{q^k}+S_{2k}^{1+3q^{2k}})\bigr]\cr
=\,&\text{Tr}\bigl[c(x+x^{q^{2k}}+S_{2k}+S_{2k}^3(S_{2k}+S_{2k}^{q^{2k}})+S_{2k}^{1+3q^{2k}})\bigr]\kern 1cm\text{(by \eqref{2.2})}\cr
=\,&\text{Tr}\bigl[c(S_{2k}^q+S_{2k}^4+S_{2k}^{3+q^{2k}}+S_{2k}^{1+3q^{2k}})\bigr]\cr
=\,&\text{Tr}\bigl[cS_{2k}^{1+q^{2k}}(S_{2k}^2+S_{2k}^{2q^{2k}})\bigr]\cr
=\,&\text{Tr}\bigl[cS_{2k}^{1+q^{2k}}(S_{2k}+S_{2k}^{q^{2k}})^2\bigr]\cr
=\,&\text{Tr}(cS_{2k}^{1+q^{2k}}S_{2k}^{2q^k}) \kern 6.3cm\text{(by \eqref{2.2})}\cr
=\,&\text{Tr}(cS_{2k}^{1+2q^k+q^{2k}}).
\end{split}
\end{equation}
By \eqref{2.1}, $S_{2k}(\Bbb F_{q^{2k}})\subset\text{Tr}_{q^{3k}/q^k}^{-1}(0)$. Since 
\[
\text{gcd}(1+{\tt x}+\cdots+{\tt x}^{2k-1},{\tt x}^{3k}-1)={\tt x}^k-1,
\]
by \cite[Theorem~3.62]{LN}, $\{x\in\Bbb F_{q^{3k}}:S_{2k}(x)=0\}=\Bbb F_{q^k}$. Thus $S_{2k}:\Bbb F_{q^{3k}}\to\text{Tr}_{q^{3k}/q^k}^{-1}(0)$ is an onto $\Bbb F_{q^k}$-map with $\ker\,S_{2k}=\Bbb F_{q^k}$. Therefore
\begin{equation}\label{2.4}
\begin{split}
\sum_{x\in\Bbb F_{q^{3k}}}(-1)^{\text{Tr}(ag(x))}\,&=\sum_{x\in\Bbb F_{q^{3k}}}(-1)^{\text{Tr}(cS_{2k}^{1+2q^k+q^{2k}})}\kern1.5cm \text{(by \eqref{2.3})}\cr
&=q^k\sum_{x\in \text{Tr}_{q^{3k}/q^k}^{-1}(0)}(-1)^{\text{Tr}(cx^{1+2q^k+q^{2k}})}.
\end{split}
\end{equation}
Let $d_1,d_2$ be a basis of $\text{Tr}_{q^{3k}/q^k}^{-1}(0)$ over $\Bbb F_{q^k}$. We have
\begin{equation}\label{2.5}
\begin{split}
&\sum_{x\in \text{Tr}_{q^{3k}/q^k}^{-1}(0)}(-1)^{\text{Tr}(cx^{1+2q^k+q^{2k}})}\cr
=\,&\sum_{x,y\in\Bbb F_{q^k}}(-1)^{\text{Tr}[c(d_1x+d_2y)^{1+2q^k+q^{2k}}]}\cr
=\,&\sum_{x\in\Bbb F_{q^k}}(-1)^{\text{Tr}[c(d_1x)^{1+2q^k+q^{2k}}]}+\sum_{\substack{x\in\Bbb F_{q^k}\cr y\in\Bbb F_{q^k}^*}}(-1)^{\text{Tr}[c(d_1x+d_2y)^{1+2q^k+q^{2k}}]}\cr
=\,&\sum_{x\in\Bbb F_{q^k}}(-1)^{\text{Tr}(cd_1^{q^k}x)}+\sum_{\substack{x\in\Bbb F_{q^k}\cr y\in\Bbb F_{q^k}^*}}(-1)^{\text{Tr}[c(d_1xy+d_2y)^{1+2q^k+q^{2k}}]}\cr
&\text{(for the first sum, $x \leftarrow d_1^{1+q^k+q^{2k}}x^{1+2q^k+q^{2k}}$; for the second sum $x\rightarrow xy$)}\cr
=\,&\sum_{x\in\Bbb F_{q^k}}(-1)^{\text{Tr}(cd_1^{q^k}x)}+\sum_{\substack{x\in\Bbb F_{q^k}\cr y\in\Bbb F_{q^k}^*}}(-1)^{\text{Tr}[c(d_1x+d_2)^{q^k}y]}\cr
&\text{(for the first sum, $y \leftarrow (d_1x+d_2)^{1+q^k+q^{2k}}y^{1+2q^k+q^{2k}}$)}\cr
=\,&\sum_{x\in\Bbb F_{q^k}}(-1)^{\text{Tr}(cd_1^{q^k}x)}+\sum_{\substack{x\in\Bbb F_{q^k}\cr y\in\Bbb F_{q^k}^*}}(-1)^{\text{Tr}(cd_1^{q^k}xy+cd_2^{q^k}y)}\cr
=\,&\sum_{x\in\Bbb F_{q^k}}(-1)^{\text{Tr}(cd_1^{q^k}x)}+\sum_{\substack{x\in\Bbb F_{q^k}\cr y\in\Bbb F_{q^k}^*}}(-1)^{\text{Tr}(cd_1^{q^k}x+cd_2^{q^k}y)}\cr
&\text{(for the second sum, $x \leftarrow xy$)}\cr
=\,&\biggl[\sum_{x\in\Bbb F_{q^k}}(-1)^{\text{Tr}(cd_1^{q^k}x)}\biggr]\biggl[1+\sum_{y\in\Bbb F_{q^k}^*}(-1)^{\text{Tr}(cd_2^{q^k}y)}\biggr]\cr
=\,&\biggl[\sum_{x\in\Bbb F_{q^k}}(-1)^{\text{Tr}(cd_1^{q^k}x)}\biggr]\biggl[\sum_{y\in\Bbb F_{q^k}}(-1)^{\text{Tr}(cd_2^{q^k}y)}\biggr].
\end{split}
\end{equation}
We claim that $\text{Tr}_{q^{3k}/q^k}(cd_1^{q^k})$ and $\text{Tr}_{q^{3k}/q^k}(cd_2^{q^k})$ cannot be both $0$. (Otherwise, $\text{Tr}_{q^{3k}/q^k}^{-1}(0)=d_1^{q^k}\Bbb F_{q^k}+d_2^{q^k}\Bbb F_{q^k}$ would be a vector space over $\Bbb F_{q^k}(c)=\Bbb F_{q^{3k}}$, which is impossible.) Therefore, at least one of the two sums in the last line of \eqref{2.3} is $0$, so we have
\begin{equation}\label{2.6}
\sum_{x\in \text{Tr}_{q^{3k}/q^k}^{-1}(0)}(-1)^{\text{Tr}(cx^{1+2q^k+q^{2k}})}=0.
\end{equation}
Combining \eqref{2.4} and \eqref{2.6} gives \eqref{2.1}.
\end{proof}

%%%%%%%%%%%%%%%%%%%%%%%%%%%%%%%%%%%%%%%%%%%%%%%%%%%%%%%%%%%%%%%%%%%%%%%%%
%             section 3 
%%%%%%%%%%%%%%%%%%%%%%%%%%%%%%%%%%%%%%%%%%%%%%%%%%%%%%%%%%%%%%%%%%%%%%%%%
\section{Generalization}

The proof in Section 3 works for the following generalization of Theorem~\ref{T1.1}

\begin{thm}\label{T3.1}
Let $q$ be a power of $2$. Let $L\in\Bbb F_{q^{3k}}[{\tt x}]$ be a $2$-linearized polynomial such that 
\begin{itemize}
  \item [(i)] $L$ permutes $\Bbb F_{q^k}$, and
  \item [(ii)] $L+L^{q^{2k}}\equiv S_{2k}^4\pmod{{\tt x}^{q^{3k}}-{\tt x}}$.
\end{itemize}  
Then $L+S_{2k}^{q^k+3}$ is a PP of $\Bbb F_{q^{3k}}$.
\end{thm}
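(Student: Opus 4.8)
The plan is to follow verbatim the structure of the proof of Theorem~\ref{T1.1}, recognizing that conditions (i) and (ii) are exactly the abstract properties of the polynomial ${\tt x}+S_{2k}^{q^{2k}}$ that were exploited there when $q=4$. I would write $g=L+S_{2k}^{q^k+3}$ and $\text{Tr}=\text{Tr}_{q^{3k}/2}$; by Fact~\ref{F2.1} it suffices to prove $\sum_{x\in\Bbb F_{q^{3k}}}(-1)^{\text{Tr}(ag(x))}=0$ for every $a\in\Bbb F_{q^{3k}}^*$, and I would again split according to whether $\text{Tr}_{q^{3k}/q^k}(a)$ vanishes, retaining the relation \eqref{2.2}.

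For the case $\text{Tr}_{q^{3k}/q^k}(a)\ne 0$ the argument is essentially unchanged, except that the shift now invokes condition (i). Since $L$ is $2$-linearized and $S_{2k}$ vanishes on $\Bbb F_{q^k}$, for $y\in\Bbb F_{q^k}$ one has $g(x+y)-g(x)=L(y)\in\Bbb F_{q^k}$, so $\text{Tr}[ag(x+y)-ag(x)]=\text{Tr}_{q^k/2}[L(y)\,\text{Tr}_{q^{3k}/q^k}(a)]$. Because $L$ permutes $\Bbb F_{q^k}$, the value $L(y)$ ranges over all of $\Bbb F_{q^k}$ as $y$ does, so $y$ can be chosen to make this a nonzero constant and Fact~\ref{F2.2} closes the case. (In Theorem~\ref{T1.1} one had $L(y)=y$, so permutation was automatic.)

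The real content is the case $\text{Tr}_{q^{3k}/q^k}(a)=0$, where I would write $a=c+c^{q^k}$ with $c\notin\Bbb F_{q^k}$ and repeat the computation \eqref{2.3}. The identity $\text{Tr}(ag(x))=\text{Tr}[c(g(x)+g(x)^{q^{2k}})]$ holds as before, and I would now substitute condition (ii), $L+L^{q^{2k}}\equiv S_{2k}^4$, in place of the explicit linearized part. The step I expect to require the most care is verifying that this substitution still collapses to a single monomial exponent: using \eqref{2.2} one writes $S_{2k}^{q^k+3}=S_{2k}^3(S_{2k}+S_{2k}^{q^{2k}})=S_{2k}^4+S_{2k}^{3+q^{2k}}$, and then the two $S_{2k}^4$ contributions -- one from condition (ii) and one from $S_{2k}^{q^k+3}$ -- cancel in characteristic $2$, leaving $S_{2k}^{3+q^{2k}}+S_{2k}^{1+3q^{2k}}=S_{2k}^{1+q^{2k}}(S_{2k}+S_{2k}^{q^{2k}})^2$, which by \eqref{2.2} equals $S_{2k}^{1+2q^k+q^{2k}}$. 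This is precisely the point where the hypothesis $q=4$ entered the original argument (through $S_{2k}^q+S_{2k}^4=0$); condition (ii) is engineered to reproduce the same cancellation for an arbitrary power $q$ of $2$.

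Once $\text{Tr}(ag(x))=\text{Tr}(cS_{2k}^{1+2q^k+q^{2k}})$ is established, the remainder carries over without change. The facts that $\ker S_{2k}=\Bbb F_{q^k}$ and that $S_{2k}$ maps $\Bbb F_{q^{3k}}$ onto $\text{Tr}_{q^{3k}/q^k}^{-1}(0)$ as an $\Bbb F_{q^k}$-linear map depend only on the $q$-associate of $S_{2k}$ and on \cite[Theorem~3.62]{LN}, not on the value of $q$, so the sum reduces to $q^k\sum_{x\in\text{Tr}_{q^{3k}/q^k}^{-1}(0)}(-1)^{\text{Tr}(cx^{1+2q^k+q^{2k}})}$. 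Expanding in a basis $d_1,d_2$ of $\text{Tr}_{q^{3k}/q^k}^{-1}(0)$ over $\Bbb F_{q^k}$ as in \eqref{2.5} factors this into a product of two additive character sums, and the observation that $\text{Tr}_{q^{3k}/q^k}(cd_1^{q^k})$ and $\text{Tr}_{q^{3k}/q^k}(cd_2^{q^k})$ cannot both vanish (else $\text{Tr}_{q^{3k}/q^k}^{-1}(0)$ would be an $\Bbb F_{q^{3k}}$-subspace) forces one factor to be $0$, which yields the vanishing of the exponential sum and completes the proof.
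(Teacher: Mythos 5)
Your proposal is correct and follows exactly the route the paper takes: the published proof of Theorem~\ref{T3.1} simply says to repeat the proof of Theorem~\ref{T1.1}, choosing $y\in\Bbb F_{q^k}$ with $\text{Tr}_{q^k/2}\bigl[L(y)\text{Tr}_{q^{3k}/q^k}(a)\bigr]\ne 0$ in Case 1 and observing that condition (ii) preserves \eqref{2.3} in Case 2, which is precisely what you do. Your additional remarks (that condition (ii) supplies the $S_{2k}^4$ term whose cancellation previously relied on $q=4$, and that the kernel computation via the $q$-associate is independent of $q$) are accurate elaborations of the same argument rather than a different approach.
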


\noindent{\bf Note.} 
$L=({\tt x}+S_{2k}^{q^{2k}})^{4q^{3k-1}}$ satisfies conditions (i) and (ii) of Theorem~\ref{T3.1}. (i) is obvious. For (ii), we have 
$L+L^{q^{2k}}=({\tt x}+S_{2k}^{q^{2k}}+{\tt x}^{q^{2k}}+S_{2k}^{q^k})^{4q^{3k-1}}\equiv ({\tt x}+{\tt x}^{q^{2k}}+S_{2k})^{4q^{3k-1}}=(S_{2k}^q)^{4q^{3k-1}}\equiv S_{2k}^4$, where ``$\equiv$'' means ``$\equiv\pmod{{\tt x}^{q^{3k}}-{\tt x}}$''.

\begin{proof}[Proof of Theorem~\ref{T3.1}]
Let $g=L+S_{2k}^{q^k+3}$ and follow the proof of Theorem~\ref{T1.1}. In Case 1, we choose $y\in\Bbb F_{q^k}$ such that $\text{Tr}_{q^k/2}\bigl[L(y)\text{Tr}_{q^{3k}/q^k}(a)\bigr]\ne 0$. In Case 2, \eqref{2.3}, still holds because of condition (ii).
\end{proof} 

%%%%%%%%%%%%%%%%%%%%%%%%%%%%%%%%%%%%%%%%
%          references 
%%%%%%%%%%%%%%%%%%%%%%%%%%%%%%%%%%%%%%%% 

\end{document}